
\documentclass{article}
\RequirePackage{amsthm,amsmath,amsfonts,amssymb,color}
\RequirePackage[numbers]{natbib}

\newcommand{\corO}{}
\def\P{\mathbb{P}}
\def\E{\mathbb{E}}
\newtheorem{theorem}{Theorem}

\newtheorem{corollary}[theorem]{Corollary}
\newtheorem{lemma}[theorem]{Lemma}

\theoremstyle{definition}
\newtheorem{remark}[theorem]{Remark}
\newtheorem{dfn}[theorem]{Definition}

\begin{document}

\title{Exponential growth of random infinite Fibonacci sequences}
\author{Ilya Goldsheid\footnote{School of Mathematics, Queen Mary University of London,
Mile End Road,London E1 4NS, England. email: i.goldsheid@qmul.ac.uk } 
\and Ofer Zeitouni\footnote{Department of Mathematics, Weizmann Institute of Science, POB 26, Rehovot 76100, Israel. email: ofer.zeitouni@weizmann.ac.il}}
\date{August 3, 2026}
\maketitle
\begin{abstract}
We consider the recursion  $X_{n+1}=\sum_{i=0}^n \epsilon_{n,i}X_{n-i}$, where $\epsilon_{n,i}$ are i.i.d. (Bernoulli) random variables taking values in $\{-1,1\}$, and $X_0=1$, $X_{-j}=0$ for $j>0$. We prove that
almost surely, $n^{-1}\log |X_n|\to \bar \gamma>0$, where $\bar \gamma$ is an appropriate Lyapunov exponent. This answers a question of Viswanath and Trefethen (\textit{SIAM J. Matrix Anal. Appl. 19:564--581, 1998}).
\end{abstract}

\section{Introduction}
Let $a_{i,n}$ denote a triangular array of i.i.d.,
zero mean random variables of law $\mu$.
In their study of the condition number of random Gaussian matrices,
Viswanath and Trefethen \cite{VT} considered the recursion
\begin{equation}
  \label{eq-030524c}
\corO{  t_0=1,\ t_{n+1}=\sum_{i=0}^n a_{i,n} t_{n-i}/a_{n,n}}
\end{equation}
for the case when $\mu$ is the standard Gaussian law.
Using remarkable
explicit computations, they where able to compute
$\lim n^{-1} \log (\corO{\sum_{i=0}^n t_i^2})$
and prove that it converges almost surely as $n\to \infty$ to $\log 4$; they
also showed that this coincides with the exponential rate of growth
of the above-mentioned condition number.

It is natural  to ask similar questions for other distributions, and in fact
this question already appears in \cite{VT}, see also \cite{ET}.
A particularly interesting case is when
$\mu$ is the symmetric Bernoulli law  on $\{-1,1\}$.
In that case, the recursion coincides in law with the recursion
\begin{equation}
  \label{eq-1}
  X_{n+1}=\sum_{i=0}^n \epsilon_{n,i}X_{n-i},
\end{equation}
where $\epsilon_{n,i}$ are iid, zero mean, Bernoulli random variables
with values in $\{-1,1\}$, for which the explicit computation carried out in
\cite{VT} does not apply.
Partially motivated by this question,
Viswanath \cite{V} considered the case of a
random Fibonacci sequence, i.e. when
\eqref{eq-1} is replaced by
\begin{equation}
  \label{eq-1aa}
  F_{n+1}=\epsilon_{n,0} F_n+\epsilon_{n,1} F_{n-1}.
\end{equation}
In this case, the vector $(F_{n+1},F_{n})$ can be presented as a product of $2\times 2$ random matrices applied to $(F_{1},F_{0})$.
Using Furstenberg's theory, Viswanath proved that $|F_n|$ grows exponentially. He also
evaluated the rate of growth to arbitrary precision.

\corO{The main  goal} of this paper is to return to the Viswanath-Trefethen
question in the case of Bernoulli variables, and prove
an almost sure exponential rate of growth. That is,
we consider the recursion \eqref{eq-1},
where $\epsilon_{n,i}$ are iid, zero mean,
Bernoulli random variables taking values in $\{-1,1\}$, and $\hat X_0=e_0=(1,0,\ldots)\in \ell_2$.
It will be convenient to
introduce the vector
\begin{equation}\label{eqX_n}
  \hat X_n=(X_n,X_{n-1},\ldots,X_0,0,\ldots)\in \ell_2.
\end{equation}
Our main result is the following
\begin{theorem}
  \label{thm-1}
  There exists a deterministic constant $\gamma>0$ so that
  \begin{equation}
    \label{eq-250424a}
    \lim_{n\to\infty} \frac{1}{n}\log |X_n|
    =\lim_{n\to\infty} \frac{1}{n}\log \|\hat X_n\|_2
    =\gamma>0, \quad a.s.
  \end{equation}
\end{theorem}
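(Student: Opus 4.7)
The plan is to normalize and telescope, then reduce the claim to an ergodic theorem for a Markov chain on the unit sphere of $\ell_2$. Set $\hat Y_k := \hat X_k/\|\hat X_k\|_2$ and $R_k := \langle\epsilon_k,\hat Y_k\rangle$, where $\epsilon_k=(\epsilon_{k,0},\ldots,\epsilon_{k,k},0,\ldots)$. Since $\hat X_{k+1}$ is just $\hat X_k$ with the fresh entry $X_{k+1}=\langle \epsilon_k,\hat X_k\rangle$ prepended, the identity $\|\hat X_{k+1}\|_2^2 = \|\hat X_k\|_2^2(1+R_k^2)$ is immediate, and telescoping gives
\[
  \log\|\hat X_n\|_2 \;=\; \frac{1}{2}\sum_{k=0}^{n-1}\log(1+R_k^2).
\]
Let $f(v):=\E_{\epsilon}\log(1+\langle\epsilon,v\rangle^2)$, so that the conditional mean of $\log(1+R_k^2)$ given $\mathcal{F}_k:=\sigma(\epsilon_{j,i}:j<k,\,i\ge 0)$ equals $f(\hat Y_k)\in[0,\log 2]$. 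Then $M_n:=\sum_{k<n}\bigl(\log(1+R_k^2)-f(\hat Y_k)\bigr)$ is a martingale whose conditional increment variance is at most $\E[R_k^2\mid\mathcal{F}_k]=1$ (using $\log(1+x)\le\sqrt{x}$), so Kolmogorov's martingale SLLN yields $M_n/n\to 0$ almost surely. The task therefore reduces to showing that $n^{-1}\sum_{k<n}f(\hat Y_k)\to 2\gamma$ a.s., for some $\gamma>0$.

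Positivity is the easier half: $\E[R_k^2\mid\mathcal{F}_k]=1$ together with Khintchine's bound $\E[R_k^4\mid\mathcal{F}_k]\le 3$ gives, via Paley--Zygmund, $\P(R_k^2\ge 1/2\mid\mathcal{F}_k)\ge c$ uniformly, whence $f(\hat Y_k)\ge c'>0$. For the convergence itself, I would proceed by a coupling argument on a two-sided extension of the environment: enlarge the noise to an i.i.d.\ family $\{\epsilon_{n,i}\}_{n\in\mathbb{Z},\,i\ge 0}$ and construct, for each $n\in\mathbb{Z}$, a candidate stationary profile $\hat Y_n^\infty$ as the almost-sure limit as $N\to\infty$ of the iterate started from some fixed vector at time $-N$. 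Birkhoff's theorem on the shift of the two-sided environment would then give $n^{-1}\sum f(\hat Y_k^\infty)\to\E f(\hat Y_0^\infty)=:2\gamma$, and a coupling/contraction estimate between the forward chain $\hat Y_k$ and the stationary one $\hat Y_k^\infty$ would transfer this conclusion to the actual averages.

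The hard part will be this coupling/contraction step. The chain $(\hat Y_k)$ lives on a noncompact sphere whose support dimension grows with $k$, so standard Doeblin- or Harris-type machinery is not directly available. What I would want to exploit is that the profile $\hat Y_k^{(j)}=X_{k-j}/\|\hat X_k\|_2$ should decay geometrically in $j$ at rate $\gamma$; since this decay is only available a posteriori, the argument is presumably a bootstrap: first establish a crude exponential lower bound on $\|\hat X_n\|_2$ (already implied by the Paley--Zygmund positivity above applied in expectation) to localize the chain on vectors with mass concentrated near the top, then upgrade this to a genuine contraction and to the uniqueness of the stationary law. Finally, to pass from $\|\hat X_n\|_2$ back to $|X_n|$, I would use $|X_{n+1}|=\|\hat X_n\|_2\cdot|R_n|$: the upper bound on $n^{-1}\log|X_n|$ is immediate from the trivial $|R_n|\le\sqrt{n+1}$, and the matching lower bound $n^{-1}\log|R_n|\to 0$ a.s.\ would come from a Littlewood--Offord-style anticoncentration estimate for the Rademacher sum $R_n$ conditional on $\hat Y_n$, together with Borel--Cantelli.
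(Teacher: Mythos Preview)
Your telescoping reduction and the martingale SLLN step are correct and clean, and your Paley--Zygmund argument does give a uniform lower bound on $f(\hat Y_k)$, hence $\liminf_n n^{-1}\log\|\hat X_n\|_2>0$ a.s. (the paper obtains the same conclusion by a different route, bounding $\E(\|AY\|_2^{-1})\le\alpha<1$ and iterating). But the existence of the limit---which is the real content of the theorem---is left open in your sketch, and this is where your approach diverges from the paper and where the genuine gap lies.

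You propose to obtain the limit by proving contraction/uniqueness for the Markov chain $(\hat Y_k)$ via a synchronous coupling and a pullback construction. This is exactly the strategy the paper carries out in its Appendix~B, but \emph{only for the Gaussian case}: there the pair $(g_n,\tilde g_n)$ is conditionally jointly Gaussian, one can write $\tilde g_n=\rho_n g_n+a_n w_n$ explicitly, and a numerical evaluation of a one-parameter expectation shows the coupled chains contract. For Bernoulli $\epsilon_{n,i}$ no such structure is available, and the paper does \emph{not} attempt a contraction argument; instead it embeds the problem into a weighted space $H_{c,2}$, writes $A_n=U+K_n$ with $U$ an isometry and $K_n$ rank one, and invokes the Ruelle/Goldsheid--Margulis Oseledec-type theorem for products of such operators to get the existence of the top Lyapunov exponent and the finite-codimension ``bad'' subspace $\mathcal H$. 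The substantive work is then to show $e_0\notin\mathcal H$ a.s.\ (done via a Littlewood--Offord argument on integer-valued entries), and to transfer between the $H_{c,2}$ and $\ell_2$ norms (done by proving the normalized profile stays in a class $M_\alpha$ of exponentially decaying measures). Your bootstrap idea---first localize the profile to exponentially decaying vectors, then prove contraction---captures the spirit of the $M_\alpha$ estimate, but it does not by itself yield ergodicity of the chain in the Bernoulli setting; you would still need a mechanism replacing the Oseledec theorem, and none is supplied.

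There is also a smaller gap in your final step. Conditioned on $\hat Y_n$, Erd\H{o}s--Littlewood--Offord gives at best $\P(|R_n|\le\delta\mid\mathcal F_n)=O(1/\sqrt{n})$, which is not summable, so Borel--Cantelli does not apply directly. The paper handles this by a two-stage bootstrap: first use the already-established growth of $\|\hat X_n\|_2$ to produce many large $X_i$'s in disjoint blocks, then use the S\'ark\H{o}zy--Szemer\'edi refinement (which requires many \emph{distinct} coefficients and gives $O(m^{-3/2})$ rather than $O(m^{-1/2})$) twice to push the bound to $O(n^{-9/8+o(1)})$, which is summable.
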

\begin{remark} We do not have an explicit expression for the constant $\gamma $.
Numerical computations reported in \cite[Page 2483]{ET} suggest the value
$\gamma\sim \log(1.3272)\sim 0.12294$.
\end{remark}
\begin{remark}
  Our methods, which rely heavily on a result concerning products of
  random operators due to Ruelle and to Goldsheid-Margulis,
  can be extended to other laws $\mu$. We shall prove that the second limit in \eqref{eq-250424a}
  exists for a wide class of distributions $\mu$ and initial conditions $\hat X_0$.
  \corO{Our proof that the limit is deterministic, and the proof of the convergence of the first limit,} is specific to the Bernoulli case, but probably could be extended
  beyond that by using modern analogues of the  S\'{a}rk\H{o}zy-Szemer\'{e}di theorem we employ.
  It turns out that  for the recursion \eqref{eq-1} with the law $\mu$
  being Gaussian, a different proof based on a certain contraction
  property can be given. We provide a sketch in Appendix \ref{app-B}.
\end{remark}
\subsection{Notation and conventions}

 We always suppose that the random variables we consider are defined on
a probability space $(\Omega,\mathcal{F},\P)$, $\E(\cdot)$ denotes the expectation
with respect to the probability measure $\P$, and $\E(\cdot\mid \cdot)$ is the conditional expectation.

Throughout the paper, all vectors are column vectors but we write $Y = (y_0,y_1,...)$
rather than $Y = (y_0,y_1,...)^T$.
Accordingly, we write $AY$, where $A$ is a matrix rather than $AY^T$.

The norm of $Y\in\ell_2$ is often written as $\|Y\|$ rather than $\|Y\|_2$. But we use the latter when we want to
emphasize the importance of the fact that $Y$ is considered as an element of $\ell_2$.

Many of our results don't require the distributions of $\epsilon_{n,i}$'s to be Bernoulli. We state here conditions \eqref{cond1} and
\eqref{cond2} for future references.
\begin{equation}\label{cond1}
 \text{The random variables $\epsilon_{n,i},\ i\ge0,$ are iid with $\E(\epsilon_{n,i})=0$ and $\E(\epsilon_{n,i}^2)=1$.}
\end{equation}
\begin{equation}\label{cond2}
 \E(\epsilon_{n,i}^4)<\infty.
\end{equation}
It will always be clear from the context whether we are dealing with the Bernoulli distribution
or with the more general case.

\subsection{A bird's eye view of the proof}
\label{sec-bird}
\corO{The claim \eqref{eq-250424a} contains in fact several distinct statements: that the second limit involving the $\ell_2$ norm of $\hat X_n$ exists and is deterministic a.s, that it is positive, and that it equals the first limit.  While related, the proofs of these claims employ quite different tools. We review next the different arguments involved.}

\corO{As with Viswanath's argument  \cite{V}, the first step is to look at the recursion \eqref{eqX_n} satisfied by $\hat X_n$, and observe that it corresponds to a product of i.i.d.  random 
operators $A_i$, defined in \eqref{eq-A},  acting on $\ell_2$ (generalizing the case of $2\times 2$ matrices in \cite{V}). That is, we can write $\hat X_n= A_n\cdots A_1 e_0=: S_n e_0$, where $e_0=(1,0,\ldots)$ is a unit vector in $\ell_2$.
It is relatively straightforward (using Kingman's sub-additive ergodic  theorem)  to prove that the  limit of $n^{-1} \log \|S_n\|_{c,2}$ exists and a.s. equals  a (positive) deterministic constant $\gamma$, referred to as the top 
Lyapunov exponent associated with the sequence $S_n$ (here, $\|S_n\|_{c,2}$ denotes the operator norm of $S_n$, in the space $H_{c,2}$
 introduced in Section \ref{sec-2.1.1}; we note that the operator norm of $S_n$ in $\ell_2$ is infinite!). Thus, the whole difficulty is in proving that the same limit occurs with a specific  deterministic initial condition, when one considers $n^{-1} \log \|\hat X_n\|_2$, and moreover that the same limit is obtained when considering $n^{-1} \log |X_n|$, that is, just the first coordinate of $\hat X_n$.}
  
\corO{Because we are dealing with an infinite dimensional space, Furstenberg's theory does not apply verbatim. Instead, a theory applicable 
 to Hilbert spaces,  which can be viewed as an extension of Oseledets' theorem, was developed by Ruelle \cite{Ru}. We will use a version
of this theory,
 due to Goldsheid and Margulis \cite{GM}.  The latter applies in particular to i.i.d. operators on a Hilbert space, of the form $U+K_n$  where $U$ is an isometry and $K_n$ is a  compact operator.
Our operators $A_n$ are not quite of this form, and so our first step is to move the discussion to another Hilbert space, denoted $H_{c,2}$, which is well suited for the application of the Goldsheid-Margulis theory.
This is explained in detail in Section \ref{sec-GM}. The discussion here is quite general and applies to sequences of random variables satisfying \eqref{cond1} and \eqref{cond2}. The conclusion of the Goldsheid-Margulis theory is then that $H_{c,2}$ can be decomposed as a direct sum of a (random) finite dimensional subspace $H_0$  and  a random subspace $\mathcal{H}$, such that 
if $Y\notin \mathcal{H}$ then the exponential growth rate of $\|S_n Y\|_{c,2}$ coincides a.s. with $\gamma$, and more generally for any $Y\in H_{c,2}$, the limit $n^{-1} \log \|S_n Y\|_{c,2}$ exists a.s. Here, $\|\cdot\|_{c,2}$ denotes the norm in $H_{c,2}$.}

\corO{Once the fact that $n^{-1} \log \|S_n Y\|_{c,2}$ converges (to a possibly random limit) is established, elementary arguments (contained in Section \ref{sec-positivity}) show that the same limit 
is obtained for the $\ell_2$ norm, and further that the limit is a.s. positive for deterministic $Y$. Again, these facts are proved under the assumptions \eqref{cond1} and \eqref{cond2}.}

\corO{The crucial part of the proof of the second equality in \eqref{eq-250424a} consists in showing that the limits discussed so far are in fact a.s. deterministic, for  initial condition $e_0\in \ell_2$. This is 
equivalent to showing that $e_0\not\in \mathcal{H}$, and here we use specific properties of Bernoulli variables, and in particular the anti-concentration of weighted sums of i.i.d. Bernoulli random variables, as provided by the
Littlewood-Offord theorem, see Lemma \ref{Lemma10} and its proof. A slightly more sophisticated anti-concentration result (the S\'{a}rkozy-Szemer\'{e}di theorem)
 is then used to prove the first equality in \eqref{eq-250424a}. All this is contained in Section \ref{sec-proofthm1}.}

\section{Proofs}
\corO{We now turn to the program outlied in Section \ref{sec-bird}.}
\subsection{Reduction to a question about products  of operators and  the Goldsheid-Margulis theory}
\label{sec-GM}
\subsubsection{Setup and the space $H_{c,2}$} \label{sec-2.1.1}
If, as above, $\hat X_0=(1,0,0,...)$ then we can write
\begin{equation}
  \label{eq-1a}
  \hat X_{n}=A_n\cdots A_1 \hat X_0
\end{equation}
where
\begin{equation}\label{eq-A}
  A_n=\left(\begin{array}{llll}
    \epsilon_{n,0}&\epsilon_{n,1}& \ldots&\ldots\\
    1&0&\ldots&\ldots\\
    0&1&\ldots&\ldots\\
    \ldots&0&1&0\\
    \vdots&\vdots&\vdots&\vdots
  \end{array}
  \right).
\end{equation}
The vectors in \eqref{eq-1a} are exactly the ones defined by \eqref{eqX_n}.

The sequence $\hat X_n$ is well defined because these vectors have finite support.
But if the initial condition is an arbitrary vector $Y_0\in\ell_2$, then one has to be more careful
because the matrices $A_n$ viewed as operators acting on $\ell_2$ have almost surely infinite norms.
Nevertheless, by the Khinchin-Kolmogorov theorem,  condition \eqref{cond1} implies that
the series $\sum_{i=0}^{\infty}\epsilon_{n,i}y_i$ converges with probability 1 if $\sum_{i=0}^{\infty} y_i^2 <\infty$. Therefore,
for every $Y_0\in \ell_2$ the sequence
\begin{equation}\label{eqY_n}
  Y_n=A_{n}...A_1Y_0
\end{equation}
is well defined with probability 1. However, in order to control the behaviour of the sequence
$Y_n$ we are going to use theorems that require the $A_n$'s to be bounded operators.
To overcome this dilemma, we introduce a family of Hilbert spaces.
Namely, for $c\geq 0$ real, set
$$H_{c,2}=\{x\in \ell_2: \sum_{i=0}^\infty e^{ci}x_i^2<\infty\},$$
and denote the natural norm in $H_{c,2}$ by $\|\cdot\|_{c,2}$ (obviously, $H_{0,2}=\ell_2$.)
Then, if \eqref{cond1} is satisfied and $c>0$, $A_n$ is almost surely a bounded operator from $H_{c,2}$ to itself.

\subsubsection{The Goldsheid-Margulis theory}
\corO{We shall now state a version of a result from \cite{GM} that we are going to use (see \cite{Ru} for a similar statement).}
Let $U$ be an isometry  and let $K_n,\ n\ge1$ be a sequence of iid compact random operators acting
on a Hilbert space $H$. Set $V_n=(U+K_n)(U+K_{n-1})...(U+K_{1})$. By the  Kingman sub-additive ergodic theorem, the following limit exists almost surely:
\begin{equation}\label{normV}
  \lim_{n\to\infty}\frac{1}{n}\log\| V_n\|_H=:\bar\gamma.
\end{equation}
(We show below in Lemma~\ref{lemma2} that in our context $\bar \gamma >0$.)
\begin{theorem}[Goldsheid-Margulis {\cite[Theorem 1.9]{GM}}]
\label{theo-GM}
The sequence of products $V_n$ has
the following properties which are satisfied almost surely: there is a (random) decomposition $H=H_0\oplus\mathcal{H}$ such that:
 \begin{itemize}
   \item[(a)] $H_0$ is finite dimensional.
   \item[(b)] For $v\not\in \mathcal{H}$,
    $\lim_{n\to\infty} \frac1n \log \|V_nv\|_H =\bar\gamma$, a.s.
  \item[(c)] There exists $\bar\gamma'<\bar\gamma$ so that for any $v\in \mathcal{H}$,
    $\lim_{n\to\infty} \frac1n \log \|V_n v\|_H\leq \bar\gamma'$.
 \end{itemize}
\end{theorem}
\begin{remark} 
\corO{The statement in [Theorem 1.9]{GM} is given for $U$ which is a unitary operator. However,
 a careful examination of
the proof  reveals that it works if $U$ is just an isometry (rather than unitary) operator.}
\end{remark}

\subsubsection{Application of Theorem \ref{theo-GM} to our setup}
\label{sec-setupapp}
\corO{We assume \eqref{cond1} and \eqref{cond2}, and  take  $H=H_{c,2}$.  The matrices $A_n$ defined by \eqref{eq-A} have the form
$A_n=U+K_n$, where $U$ is the right shift operator (which is not an isometry on $H$),} and $K_n$ is an a.s.  bounded operator whose range
is the one-dimensional subspace
of $H_{c,2}$ generated by the vector $e_0=(1,0,\ldots)$.
More precisely, if $Y=(y_0,y_1\ldots)\in H_{c,2}$ then
\[
UY=(0,y_0,y_1,\ldots) \text{ and } K_nY=\left(\sum_{i=0}^{\infty} \epsilon_{n,i} y_i\right)e_0.
\]
Since
\begin{equation}
  \label{eq-bernoulli1}
  |\sum \epsilon_{n,i} y_i|\leq \left(\sum \epsilon_{n,i}^2 e^{-ci}\right)^{1/2} \left(\sum y_i^2 e^{ci}\right)^{1/2}
  = C\|Y\|_{c,2},
\end{equation}
where $C=C(n,\omega)$ is a random constant, we see that $K_n$ is indeed \corO{a.s.} a bounded rank 1 operator and hence it also is a compact operator.

Next, note that the operator norm $\|U\|_{c,2}=e^{c/2}$, and that $e^{-c/2} U$
is an isometry operator on $H_{c,2}$.
It follows that $e^{-c/2} A_n$ is the sum of a deterministic isometry operator
and a random (bounded) compact operator of rank 1.
Hence the properties (a), (b), (c) \corO{of Theorem \ref{theo-GM}} hold true for products of matrices $A_n$
acting on $H_{c,2}$.

\subsection{Proof of existence and positivity of  the second limit in \eqref{eq-250424a}}
\label{sec-positivity}
\corO{In this section we will first show the positivity  of the liminf (instead of limit)  in the second term in \eqref{eq-250424a}. This will be done for arbitrary initial condition in $\ell_2$.
Since the existence of the limit  is provided by Theorem \ref{theo-GM}  only with the $H_{c,2}$ norm, in the second part of the section we show 
the existence of the \corO{possibly random} second limit in \eqref{eq-250424a}, \corO{under \eqref{cond1} and \eqref{cond2},}  for any initial condition $Y_0=\hat X_0$
from a certain subset of the unit sphere. }

\corO{We will first show the positivity (with the $\ell_2$ norm) of the liminf  in the second term in \eqref{eq-250424a}.
We need the following elementary lemma.}
\begin{lemma}\label{lemma1} Let $A$ be a matrix that has the same distribution as $A_1$.
We denote the entries of its first row by $\epsilon_i,\ i\ge 0$, where $\epsilon_i$ are iid random variables
with $\E(\epsilon_i)=0$, $\E(\epsilon_i^2)=\sigma^2$, $\E(\epsilon_i^4)=D<\infty$. Then there is $\alpha <1$
such that
\begin{equation}\label{alpha}
\corO{\left(\E\left(\|AY\|_{2}^{-1}\right)\right)^{2}\leq } \E\left(\corO{\|AY\|_{2}^{-2}}\right)\le \alpha^2\ \text{ for all  fixed $Y\in \ell_{2}$ with $\|Y\|_{2}=1$}.
\end{equation}
\end{lemma}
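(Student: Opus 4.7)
The plan is to exploit the very simple structure of $A_1$: the operator acts as a right shift except on the first coordinate, which becomes a random linear functional of $Y$. From the form of $A$ in \eqref{eq-A}, for $Y=(y_0,y_1,\ldots)\in\ell_2$ with $\|Y\|_2=1$,
\[
  AY = (S,\, y_0, y_1, \ldots), \qquad S:=\sum_{i\ge 0}\epsilon_i y_i,
\]
where the series converges almost surely by the Khinchin--Kolmogorov theorem already invoked in the paper. In particular
\[
  \|AY\|_2^2 = S^2 + \|Y\|_2^2 = 1 + S^2,
\]
so inequality \eqref{alpha} reduces to showing that
\[
  \sup_{\|Y\|_2=1}\E\bigl((1+S^2)^{-1/2}\bigr) < 1.
\]

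The bound $(1+s^2)^{-1/2}\le 1$ with equality only at $s=0$ makes the strict inequality obvious for each individual $Y$, because $\E(S^2)=\sigma^2>0$ forces $S$ to be non-degenerate. The real content of the lemma is the uniformity in $Y$. I would quantify this by splitting at a level $c>0$: from $(1+s^2)^{-1/2}\le \mathbf{1}_{s^2\le c}+ (1+c)^{-1/2}\mathbf{1}_{s^2>c}$ one gets
\[
  \E\bigl((1+S^2)^{-1/2}\bigr)\le 1-\bigl(1-(1+c)^{-1/2}\bigr)\,\P(S^2>c),
\]
so everything reduces to a uniform lower bound on $\P(S^2>c)$ for some fixed $c>0$.

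This is where the fourth-moment hypothesis \eqref{cond2} enters via a Paley--Zygmund argument. Because the $\epsilon_i$ are iid with $\E(\epsilon_i)=0$, $\E(\epsilon_i^2)=\sigma^2$ and $\E(\epsilon_i^4)=D$, a direct expansion of $\E(S^4)$ (keeping only those index tuples in which each index appears an even number of times) gives
\[
  \E(S^4)=3\sigma^4\Bigl(\sum_i y_i^2\Bigr)^2+(D-3\sigma^4)\sum_i y_i^4 \;\le\; \max(D,3\sigma^4)=:M,
\]
using $\sum y_i^4\le(\sum y_i^2)^2=1$. Paley--Zygmund applied to the nonnegative random variable $S^2$ then yields, for every $\theta\in(0,1)$,
\[
  \P(S^2>\theta\sigma^2)\ge(1-\theta)^2\,\frac{(\E(S^2))^2}{\E(S^4)}\ge \frac{(1-\theta)^2\sigma^4}{M},
\]
with the bound independent of $Y$. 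Choosing any fixed $\theta\in(0,1)$ and setting $c=\theta\sigma^2$ in the splitting estimate gives the required $\alpha<1$, depending only on $\sigma^2$ and $D$.

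The main obstacle is really just this uniform lower bound on $\P(S^2>c)$; without \eqref{cond2} one could imagine the mass of $S^2$ escaping to infinity in a way that still keeps $\E(S^2)$ fixed at $\sigma^2$, which would allow $(1+S^2)^{-1/2}$ to be as close to $1$ as desired. The fourth-moment hypothesis prevents precisely this concentration at zero, and the Paley--Zygmund step converts that into the quantitative statement we need. The only routine care required is justifying the fourth-moment expansion for the infinite series $S$, which follows by monotone/dominated convergence on finite truncations using \eqref{cond2}.
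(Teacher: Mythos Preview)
Your proof is correct and follows essentially the same route as the paper: both reduce to bounding $\E[(1+S^2)^{-1/2}]$ uniformly in $Y$, split on the event $\{S^2>c\}$, and use a Paley--Zygmund-type inequality together with the fourth-moment bound on $S$ to get a uniform lower bound on $\P(S^2>c)$. The paper states and proves its own version of Paley--Zygmund (its Lemma~\ref{zeta>a}) rather than citing it, and uses the cruder estimate $\E(S^4)\le 7D$ in place of your sharper $\max(D,3\sigma^4)$, but structurally the arguments are the same.
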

\begin{remark}
It is important that $\alpha$ depends only on $\sigma^2$ and $D$ and the estimate \eqref{alpha} is uniform with respect to $Y$, as long as the latter does not depend on $A$.
\end{remark}
\begin{proof}[Proof of Lemma \ref{lemma1}]
\corO{If $\|Y\|=1$ then $\|AY\|^2=1+\zeta$,} where $\zeta=\left(\sum_{i=0}^{\infty}\epsilon_iy_i\right)^2$.
Note that $\E(\zeta)=\sigma^2$ and $\E(\zeta^2)\le 7D$. \corO{Recall the Paley--Zygmund inequality \cite[Lemma 19]{PZ}:
for a non-negative 
random variable $W$ and $\theta\in [0,1]$, 
$ \P(W \geq \theta \E W)\geq (1-\theta)^2 \frac{(\E W)^2}{\E W^2}. $
With $W=\zeta$ and $a=\theta \E\zeta$ we thus obtain that
for any $ 0<a<\sigma^2$,
we have 
$$p=\P(\zeta\ge a)\ge \frac{(\sigma^2-a)^2}{7D}.$$}
\corO{Using that
$\zeta\ge aI_{\zeta\ge a}$ we arrive at}
\[
 \corO{\E(\|AY\|^{-2}) }=\E\left(\frac{1}{1+\zeta}\right)\le
\E\left( \frac{1}{1+aI_{\zeta\ge a}}\right)
= 1-p +\frac{p}{1+a}=1-\frac{pa}{1+a}<1.\]
We can now set \corO{$\alpha^2=1-\max_a\frac{a(\sigma^2-a)^2}{7D(1+a)}$.}
\end{proof}
Next, set $S_n=A_n...A_1$. We have the following lemma.
\begin{lemma}\label{lemma2}
\corO{Assume \eqref{cond1} and \eqref{cond2}.}
For every fixed  $Y_0\in \ell_{2},\ \|Y_0\|=1,$
$$\liminf_{n\to \infty}\frac1n\log\|S_n Y_0\|>0 \ \text{ almost surely.}$$
\end{lemma}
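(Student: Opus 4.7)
\medskip

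\noindent\textbf{Proof plan for Lemma \ref{lemma2}.}
The plan is to use Lemma \ref{lemma1} in a telescoping/martingale fashion to show that $\E(\|S_nY_0\|^{-1})$ decays exponentially in $n$, and then pass from this moment estimate to an almost-sure lower bound on $\|S_nY_0\|$ via Markov's inequality and Borel--Cantelli.

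First, fix $Y_0\in\ell_2$ with $\|Y_0\|=1$ and let $\mathcal{F}_k=\sigma(A_1,\ldots,A_k)$. Set $Z_k=S_kY_0$ and, on the event $\{Z_{k}\ne 0\}$, define the $\mathcal{F}_k$-measurable unit vector $\tilde Y_k=Z_k/\|Z_k\|$. Then
\begin{equation*}
\|Z_n\|=\|A_n Z_{n-1}\|=\|Z_{n-1}\|\cdot\|A_n\tilde Y_{n-1}\|,
\end{equation*}
so $\|Z_n\|^{-1}=\|Z_{n-1}\|^{-1}\,\|A_n\tilde Y_{n-1}\|^{-1}$. Because $A_n$ is independent of $\mathcal{F}_{n-1}$ and $\tilde Y_{n-1}$ is an $\mathcal{F}_{n-1}$-measurable unit vector, Lemma \ref{lemma1} applied conditionally yields $\E(\|A_n\tilde Y_{n-1}\|^{-1}\mid\mathcal{F}_{n-1})\le\alpha$ on $\{Z_{n-1}\ne 0\}$.

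Hence, iterating the tower property,
\begin{equation*}
\E(\|Z_n\|^{-1})=\E\!\left(\|Z_{n-1}\|^{-1}\,\E(\|A_n\tilde Y_{n-1}\|^{-1}\mid\mathcal{F}_{n-1})\right)\le\alpha\,\E(\|Z_{n-1}\|^{-1})\le\cdots\le\alpha^n.
\end{equation*}
In particular $\P(\|Z_n\|=0)=0$ for each $n$, so the conditioning step above is justified after discarding a null set; by countable additivity $Z_n\ne 0$ for all $n$ almost surely.

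Now pick any $c$ with $0<c<-\log\alpha$. By Markov's inequality,
\begin{equation*}
\P\bigl(\|Z_n\|\le e^{cn}\bigr)=\P\bigl(\|Z_n\|^{-1}\ge e^{-cn}\bigr)\le e^{cn}\E(\|Z_n\|^{-1})\le(\alpha e^c)^n,
\end{equation*}
and $\alpha e^c<1$ makes the series $\sum_n(\alpha e^c)^n$ convergent. The Borel--Cantelli lemma then implies that almost surely $\|Z_n\|>e^{cn}$ for all sufficiently large $n$, so $\liminf_{n\to\infty}\frac{1}{n}\log\|S_nY_0\|\ge c>0$. Letting $c\uparrow-\log\alpha$ sharpens the bound to $-\log\alpha$, which is strictly positive since $\alpha<1$.

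The only subtlety is the conditioning step: one must ensure $\tilde Y_{n-1}$ is well-defined and independent of $A_n$ before invoking Lemma \ref{lemma1}. Since Lemma \ref{lemma1}'s bound is uniform over deterministic unit vectors, this is handled cleanly by conditioning on $\mathcal{F}_{n-1}$, which freezes $\tilde Y_{n-1}$ into an admissible (deterministic, under the conditional law) test vector; everything else is routine.
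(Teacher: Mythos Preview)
Your proof is correct and follows essentially the same route as the paper: telescope $\|S_nY_0\|^{-1}$ into a product, apply Lemma~\ref{lemma1} conditionally to obtain $\E(\|S_nY_0\|^{-1})\le\alpha^n$, then use Markov's inequality and Borel--Cantelli. Your write-up is slightly more careful than the paper's about the sigma-algebras and about the null event $\{Z_{n-1}=0\}$, but the underlying argument is identical.
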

\begin{proof} 
  For a given $Y_0$ define the vectors $Z_n=S_nY_0/\|S_nY_0\|$. Note that
\begin{equation}\label{eq-L6}
 \|S_n Y_0\|=\|A_{n}Z_{n-1}\|\cdot\|S_{n-1}Y_{0}\|=\|A_{n}Z_{n-1}\|\cdot\|A_{n-1}Z_{n-2}\|\cdot...\cdot\|A_{1}Y_{0}\|.
\end{equation}
Next, by the Markov inequality, for any $\delta>0$
\begin{equation}\label{eqnLE>0}
\P\left(\frac1n\log \|S_n Y_0\| < \delta\right) =\P\left(\|S_n Y_0\|^{-1}>e^{-n\delta}\right)\le
e^{n\delta}\E\left(\|S_n Y_0\|^{-1}\right).
\end{equation}
We now use \eqref{eq-L6} and compute the expectation in the rhs of \eqref{eqnLE>0} by conditioning on $S_{n-1}Y_{0}$:
\begin{align*}
\E\left(\|S_n Y_0\|^{-1}\right) & =
\E\left( \E\left(\|A_{n}Z_{n-1}\|^{-1}\cdot\|S_{n-1}Y_{0}\|^{-1}\mid\,S_{n-1}Y_{0}\right)\right) \\
 & =\E\left(\|S_{n-1}Y_{0}\|^{-1}\E\left(\|A_{n}Y_{n-1}\|^{-1}\mid\,S_{n-1}Y_{0}\right)\right).
\end{align*}
By Lemma \ref{lemma1} the conditional expectation $\E\left(\|A_{n}Y_{n-1}\|^{-1}\,\mid \,S_{n-1}Y_{0}\right)\le \alpha$
and hence
\[
\E\left(\|S_n Y_0\|^{-1}\right)\le \alpha\E\left(\|S_{n-1}Y_{0}\|^{-1}\right)\le \alpha^n.
\]
We thus have
\begin{equation}\label{eqnLE>0.1}
\P\left(\frac1n\log \|S_n Y_0\| < \delta\right) \le e^{n\delta}\alpha^n,
\end{equation}
and we see that if $\delta<-\log\alpha$ then the rhs in \eqref{eqnLE>0.1} decays exponentially fast.
By the Borel-Cantelli lemma, with probability 1 the inequality $\frac1n\log \|S_n Y_0\| < \delta$ can be satisfied
only for finitely many $n$'s. The lemma is proved.
\end{proof}

\corO{We next turn to the proof of existence of the second limit in \eqref{eq-250424a}, which we do  by showing it is the same limit as when the norm is taken in $H_{c,2}$, whose existence follows from Theorem \ref{theo-GM}. 
 We will prove it for a certain class of initial conditions. To introduce these, we need the following. Let $B=\{(z_0,z_1,\ldots): \sum_{i=0}^\infty z_i^2=1\}$ denote the unit sphere in $\ell_2$.}
\begin{dfn}
\corO{$M_\alpha$ is the set of probability distributions $\nu$ on $B$ so that
$
\E_\nu(|z_i|)\le \alpha^i.
$}
\end{dfn}

\begin{lemma}
\label{cor-8}
Set $Z_n=S_nY/\|S_nY\|$, $Z_n=(z_{n,0}, z_{n,1}, z_{n,2},...)$, where $Y$ is distributed according to $\nu_0$ and is independent of $A_i,i\geq 1$ \corO{satisfying the assumptions of Lemma \ref{lemma1}.} Let $\nu_n$ be the distribution of $Z_n,\ n \ge 1$.
If $\nu_0\in M_\alpha$ then also $\nu_n\in M_\alpha$ for all $n\ge 1$, that is
\begin{equation}\label{Z_n}
  \E(|z_{n,i}|)\le\alpha^i.
\end{equation}
\end{lemma}
\begin{proof}
\corO{The lemma is proved by the following induction. Suppose that
\begin{enumerate}
  \item[(a)] $A$ is a matrix with properties listed in Lemma \ref{lemma1}.
  \item[(b)] $Y\in \ell_2$ is a unit random vector with distribution $\nu\in M_\alpha$,
where $\alpha$ is the same as in \eqref{alpha}.
  \item[(c)] $A$ and $Y$ are independent.
\end{enumerate}
Let $\nu_1$ denote the distribution of $Z=AY/\|AY\|$ . Then we claim that $\nu_1\in M_\alpha$.
(The claim in the lemma follows by taking $\nu$ to be the law of $Z_n$ and $\nu_1$ the law of $Z_{n+1}$, and iterating.)}

To see the claim concerning $\nu_1$, write $Z=(z_0,z_1,\ldots)$ and $Y=(y_0,y_1,\ldots)$.  Then $\E(|z_0|)\le 1$ because $\|Z\|_2=1$. Next, for $i\ge 1$ we have $z_i=y_{i-1}\|AY\|^{-1}$ and so
\[
 \E|z_i|=\E\left(|y_{i-1}|\cdot\|AY\|^{-1}\right)=\E\left(|y_{i-1}|\cdot\E\{\|AY\|^{-1}\mid Y\}\right)
 \le \alpha \E\left(|y_{i-1}|\right)\le \alpha^i,
\]
 where the estimate of the expectation of $\|AY\|^{-1}$ conditioned on $Y$ is due to Lemma \ref{lemma1} \corO{and the bound on the expectation of $|y_{i-1}|$ is because $Y$ is distributed according to $\nu\in M_\alpha$.}
\end{proof}
\begin{remark}
The distribution of $Y$ in Lemma \ref{cor-8} can be supported by a single vector. To relate the notation of Lemma \ref{cor-8} to our running convention, recall \eqref{eqX_n} and 
note that if $\nu_0=\delta_{e_0}$ then $Z_n \|\hat X_n\|=\hat X_n$ and $z_{n,i} \|\hat X_n\|=X_{n-i}$ for $i\leq n$.
\end{remark}
\corO{We finally show that limits for the norm in $H_{c,2}$ transfer to the $\ell_2$ norm, as long as the initial condition has law in $M_\alpha$. }
\begin{theorem}\label{Theorem9}
\corO{Assumer \eqref{cond1} and \eqref{cond2}.}
Suppose that $0<c<-\log\alpha$ and that a random vector $Y$ with distribution $\nu_0\in M_\alpha$
is independent of the sequence $(A_i)_{i\ge0}$. Then almost surely the following limits exist and are equal:
\begin{equation}\label{lim}
\lim_{n\to\infty}\frac{1}{n}\log \|S_n Y\|_{c,2}=\lim_{n\to\infty}\frac{1}{n}\log \|S_n Y\|_{2}.
\end{equation}
\end{theorem}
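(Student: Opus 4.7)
The plan is to reduce the theorem to the statement that the ratio $\|S_n Y\|_{c,2}^2/\|S_n Y\|_2^2$ grows at most subexponentially, then exploit the geometric decay of $\E|z_{n,i}|$ from Corollary~\ref{cor-8}. The condition $c<-\log\alpha$ is exactly what makes this reduction go through.

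\textbf{Step 1 (Setup).} Put $Z_n=S_n Y/\|S_n Y\|_2=(z_{n,0},z_{n,1},\ldots)$. Because every weight $e^{ci}\ge 1$ for $c\ge 0$,
\begin{equation*}
  \|S_n Y\|_2\le \|S_n Y\|_{c,2},\qquad R_n:=\frac{\|S_n Y\|_{c,2}^2}{\|S_n Y\|_2^2}=\sum_{i\ge 0}e^{ci}z_{n,i}^2\ge 1.
\end{equation*}
Since $\|Z_n\|_2=1$, each $|z_{n,i}|\le 1$, hence $z_{n,i}^2\le|z_{n,i}|$.

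\textbf{Step 2 (Uniform moment bound).} By Corollary~\ref{cor-8}, the distribution $\nu_n$ of $Z_n$ lies in $M_\alpha$, which gives $\E|z_{n,i}|\le\alpha^i$. Combining with Step~1,
\begin{equation*}
 \E R_n\le \sum_{i\ge 0}e^{ci}\alpha^i=\frac{1}{1-e^c\alpha}<\infty,
\end{equation*}
uniformly in $n$; finiteness of this geometric sum is precisely the hypothesis $c<-\log\alpha$.

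\textbf{Step 3 (Subexponentiality).} For any $\varepsilon>0$, Markov's inequality gives $\P(R_n>e^{\varepsilon n})\le (1-e^c\alpha)^{-1}e^{-\varepsilon n}$, which is summable in $n$. Borel--Cantelli then yields $R_n\le e^{\varepsilon n}$ a.s.\ for all sufficiently large $n$. Together with the lower bound $R_n\ge 1$ and sending $\varepsilon\downarrow 0$ along a countable sequence, we conclude $n^{-1}\log R_n\to 0$ a.s.

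\textbf{Step 4 (Transfer of limits).} From $\frac{1}{n}\log\|S_n Y\|_{c,2}-\frac{1}{n}\log\|S_n Y\|_2=\frac{1}{2n}\log R_n\to 0$ a.s., the two sequences share the same liminf and limsup. The Ruelle and Goldsheid--Margulis theorem applied on $H_{c,2}$, where each $A_n=U+K_n$ is an isometry plus a compact rank-one operator, provides existence of $\lim n^{-1}\log\|S_n Y\|_{c,2}$ a.s.\ for the fixed (or independent random) $Y$, as one of the Lyapunov exponents associated with the Oseledec-type filtration; existence of the $\ell_2$ limit, with the same value, then follows from the ratio statement.

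\textbf{Main obstacle.} The only real work is Step~2: the interplay between the decay rate $\alpha$ coming from Lemma~\ref{lemma1} and the weight parameter $c$ has to be aligned exactly, and this is what the quantitative bound $c<-\log\alpha$ encodes. Once the uniform moment bound $\sup_n\E R_n<\infty$ is in hand, the remaining steps are soft. A minor subtlety is to justify existence of the $H_{c,2}$ limit when $Y$ happens to be a deterministic vector such as $e_0$; this is handled by invoking the full Oseledec-style filtration from \cite{GM} rather than the two-level version (a)--(c) stated in the excerpt.
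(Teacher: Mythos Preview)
Your proof is correct and follows essentially the same route as the paper: both reduce to showing $n^{-1}\log\|Z_n\|_{c,2}\to 0$, bound $\E\|Z_n\|_{c,2}^2\le\sum_{i\ge 0}e^{ci}\alpha^i<\infty$ via $z_{n,i}^2\le|z_{n,i}|$ and Corollary~\ref{cor-8}, and invoke \cite[Theorem~1.9]{GM} for existence of the $H_{c,2}$ limit. Your Steps~3--4 spell out the Markov/Borel--Cantelli argument that the paper leaves implicit in its final sentence, and your closing remark on the full Oseledec filtration makes explicit what the paper also takes for granted.
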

\begin{proof}
\corO{The existence of the first limit in \eqref{lim} follows from Theorem \ref{theo-GM}, as explained in Section \ref{sec-setupapp}.}
 It thus suffices to show that
\begin{equation}\label{lim2}
\lim_{n\to\infty}\frac{1}{n}\left(\log \|S_n Y\|_{c,2}-\log \|S_n Y\|_{2}\right)=
\lim_{n\to\infty}\frac{1}{n}\log \left(\|S_n Y/ \|S_n Y\|_{2}\|_{c,2}\right)=0.
\end{equation}
Note that \eqref{lim2}  holds if we show that
\begin{equation}\label{lim3}
  \lim_{n\to\infty}\frac{1}{n}\log \left(\|Z_n \|_{c,2}\right)=0,
\end{equation}
where the vector $Z_n=S_n Y/ \|S_n Y\|_{2}=(z_{n,0},z_{n,1},...)$ has distribution $\nu_n\in M_\alpha$ \corO{by Corollary \ref{cor-8}, and therefore
 $\E(z_{n,i}^2) \le \alpha^i$. But then}
\[
\E(\|Z_n \|_{c,2}^2)=\E\left(\sum_{i=0}^{\infty}e^{ci}z_{n,i}^2\right)\le \sum_{i=0}^{\infty}e^{ci}\alpha^i.
\]
The right hand side in the last display does not depend on $n$ and hence the limit in \eqref{lim3} is 0 almost surely.
\end{proof}

\subsection{Proof of Theorem \ref{thm-1}}
\label{sec-proofthm1}
\corO{As discussed in Section \ref{sec-bird},} the heart of the proof of Theorem \ref{thm-1} lies in the following lemma.
\begin{lemma}\label{Lemma10}
  We have that $e_0\not \in \mathcal{H}$, a.s.
\end{lemma}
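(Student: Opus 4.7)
My approach is a proof by contradiction. Suppose $\P(e_0\in\mathcal{H})>0$; I aim to derive a contradiction with the fact that $\mathcal{H}$ has finite codimension ($\dim H_0<\infty$ and $H_0\neq\{0\}$, the latter following since otherwise property (b) would be vacuous while property (c) would force the operator-norm rate to be at most $\bar\gamma'<\bar\gamma$, absurd).

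First, I would establish a shift-equivariance for $\mathcal{H}$. Combining properties (b) and (c) of the Goldsheid--Margulis theorem, $\mathcal{H}_\omega$ coincides with the set of $v\in H_{c,2}$ satisfying $\limsup_n \frac1n\log\|S_n(\omega)v\|_{c,2}\le\bar\gamma'$. Since $S_n(\sigma\omega)\,A_1(\omega)=S_{n+1}(\omega)$, where $\sigma$ denotes the left shift on the i.i.d.\ sequence $(A_k)_{k\ge 1}$, this characterization yields
\[
v\in\mathcal{H}_\omega \iff A_1(\omega)v\in\mathcal{H}_{\sigma\omega}.
\]
Applied to $v=e_0$ and using $A_1 e_0=e_1+\epsilon_{1,0}e_0$, this reduces to the key identity
\[
\{e_0\in\mathcal{H}_\omega\}=\{e_1+\epsilon_{1,0}e_0\in\mathcal{H}_{\sigma\omega}\}.
\]

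Next, I would exploit the independence of $\epsilon_{1,0}$ from $\mathcal{H}_{\sigma\omega}$ (the latter depending only on $(A_k)_{k\ge 2}$). Conditioning on a realization $\mathcal{H}_0$ of $\mathcal{H}_{\sigma\omega}$, the event becomes a deterministic condition on the single coin flip $\epsilon_{1,0}\in\{\pm 1\}$: if $e_0\in\mathcal{H}_0$, inclusion holds for both signs iff $e_1\in\mathcal{H}_0$; if $e_0\notin\mathcal{H}_0$, inclusion holds for at most one sign, and which one is determined by whether $e_1+e_0$ or $e_1-e_0$ lies in $\mathcal{H}_0$. Decomposing $\P(e_0\in\mathcal{H})$ according to these cases shows that its positivity forces positivity of $\P(e_0,e_1\in\mathcal{H})$ or of $\P(e_0\pm e_1\in\mathcal{H},\,e_0\notin\mathcal{H})$.

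The bulk of the argument would then iterate this procedure using $A_2,A_3,\ldots$ and the recursion $A_ne_k=e_{k+1}+\epsilon_{n,k}e_0$. Each iteration, applied to a positive-probability inclusion $v\in\mathcal{H}$, produces a new inclusion involving one additional basis element $e_{k+1}$, with random $e_0$-content coming from $K_{k+1}$. The target is to show that after sufficiently many iterations, with positive probability $\mathcal{H}$ contains arbitrarily many linearly independent combinations of $e_0,e_1,\ldots,e_N$, and by taking a limit to conclude that $\mathcal{H}$ contains $\mathrm{span}\{e_k\}_{k\ge 0}$, hence $\mathcal{H}=H_{c,2}$, contradicting $\dim H_0\ge 1$.

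The principal obstacle is quantitative control of the iteration: each Bernoulli conditioning can halve the probability, so a naive bound decays exponentially in the number of iterations and yields no contradiction in the limit. Overcoming this requires either a tree-branching argument that leverages the finiteness of $\dim H_0$ (so that only finitely many ``missing'' directions can be accommodated, forcing most branches to enlarge $\mathcal{H}$), or a more direct Furstenberg-type argument establishing that the random codimension-$d$ subspace $\mathcal{H}$ cannot concentrate its distribution on hyperplanes containing $e_0$. The latter route would require a continuity property of the invariant measure on the relevant Grassmannian of $H_{c,2}$, which seems delicate given the discrete Bernoulli distribution.
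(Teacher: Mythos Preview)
Your shift-equivariance relation $v\in\mathcal H_\omega\Longleftrightarrow A_1(\omega)v\in\mathcal H_{\sigma\omega}$ is correct and is essentially how the paper begins as well. However, the paper uses it only under the hypothesis $\P(e_0\in\mathcal H)=1$: in that case one gets $e_1+\epsilon_{1,0}e_0\in\mathcal H_{\sigma\omega}$ almost surely, hence $e_1\in\mathcal H$ almost surely, and by induction all $e_j\in\mathcal H$ almost surely, contradicting $\dim H_0<\infty$. This yields only $\P(e_0\notin\mathcal H)>0$, and the halving issue you flag never arises because no probability is lost at each step.

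The gap you identify is exactly where your proposal stalls: starting from $\P(e_0\in\mathcal H)>0$ and iterating loses a factor at each Bernoulli conditioning, and neither of the routes you sketch (a tree-branching count against $\dim H_0$, or continuity of an invariant measure on a Grassmannian) is carried out. The paper resolves this by a completely different mechanism. Having established $\P(e_0\notin\mathcal H)>0$, it fixes $\delta>0$ and shows $\P(e_0\in\mathcal H)<\delta$ as follows. Take an independent block $W_k=\widetilde A_k\cdots\widetilde A_0$ and note that $\lim_n n^{-1}\log\|A_n\cdots A_0\,W_k e_0\|$ has the same law as the quantity of interest; moreover this limit is $<\bar\gamma$ only if $W_k e_0$ is orthogonal (in $H_{c,2}$) to every $w\in H_0$. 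Since $H_0\neq\{0\}$ with high probability, one picks a $w\in H_0$ with a leading nonzero coordinate at some index $j_1\le j_0$, and then the orthogonality $\langle W_k e_0,w\rangle=0$ pins down the integer $(W_{k-j_1}e_0)_0$ to a single value $L$ determined by the remaining (conditionally fixed) data. Because the entries of $W_k e_0$ are integers and, with high probability, many of them are nonzero, the Erd\H{o}s/Littlewood--Offord anti-concentration bound gives $\P\big((W_{k-j_1}e_0)_0=L\mid\mathcal G\big)\le c/\sqrt{k}$, which is made $<\delta$ by choosing $k$ large.

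In short, the missing idea is not a branching or measure-rigidity argument but an anti-concentration input (Littlewood--Offord) combined with pre-multiplication by an \emph{independent} product $W_k$ to manufacture a vector with many nonzero integer coordinates. This is what converts the qualitative $\P(e_0\notin\mathcal H)>0$ into the quantitative $\P(e_0\notin\mathcal H)>1-\delta$ for arbitrary $\delta$.
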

\begin{proof}
  We begin by showing that $P(e_0\not \in \mathcal{H})>0$. Indeed, if $P(e_0\in \mathcal{H})=1$, then
also $P(e_j\in \mathcal{H})=1$ for any $j\ge 1$. Indeed, under the assumption, note that $A_0 e_0\in \mathcal{H}$. On the other hand, $A_0 e_0=e_1+\xi e_0$ for an approprite random variable $\xi$. Since $e_0\in \mathcal{H}$ almost surely, it follows that necessarily $e_1\in \mathcal{H}$. The claim for general $j$ follows by induction.

 We fix $\delta>0$ and show that
  $P(e_0\not \in \mathcal{H})>1-\delta$.
 For any $v=(v_0,v_1,\ldots)\in H$, set
  \[I_v=\min\{i: v_i\neq 0\}.\]
  Introduce the event
  \[\mathcal{A}_j=\{\exists w\in H_0: I_w\leq j\}\]
  Note that $a_j:=P(\mathcal{A}_j)\to 1$ as $j\to \infty$. Fix now $j_0$ so that  $a_{j_0}\geq 1-\delta/2$.

  Recall \corO{Erd\H{o}s' version of the Littlewood-Offord lemma \cite{Er}:} there is a universal constant $c$ such that  with $\epsilon_i$ iid standard Bernoulis and $b_j$ nonzero deterministic
  integers,
  \begin{equation}
\label{eq-Erdos}
\max_T \P(\sum_{i=1}^k \epsilon_i b_i=T)\leq c/\sqrt{k}.\end{equation}
  Choose now $k_0$ such that $c/\sqrt{k_0/2}<\delta/4$.

  Let $\widetilde{A}_i$ be i.i.d., independent of the $A_i$s and equidistributed as them. Set
$W_k=\widetilde A_k\cdots \widetilde A_0$. We first note that the entries of
$W_k e_0$ are all integers. By an application of the Littlewood-Offord
theorem, there exists $k_1$ so that with
\[\mathcal{B}_{k_1}=\cup_{k\geq k_1-1} \{|\{ i\leq k: (W_k e_0)_i\neq 0\}|<k/2\},\]
we have that $\P(\mathcal{B}_{k_1})\leq \delta/8$.

Fix $k=(k_0+k_1+j_0)$ and set $B_{n}=A_n\cdots A_0 \cdot W_k$. Note that
$\theta\!:=$ $\lim_{n\to \infty}$ $n^{-1}\!\log \!\|B_n e_0\|$ has the same law as $\lim_{n\to \infty} n^{-1} \log \|A_n\cdots A_0 e_0\|$. We will show that
$ \P(\theta<\gamma)<\delta$.

Assume that $\mathcal{A}_{j_0}$ holds. Fix  $w\in H_0$ (random) which achieves the event in $\mathcal{A}_{j_0}$. Let $j_1\leq j_0$ be the minimal  index $j$ with $w_j\neq 0$.
We have that $\{\theta<\gamma\}\subset \{\langle W_k e_0,w\rangle_H=0\}.$
We will show that $\P(
\langle W_k e_0,w\rangle_H=0)<\delta$.
In fact, we will show that
\[\P(
\langle W_k e_0,w\rangle_H=0| w, \mathcal A_{j_0})\leq \delta/2.\]
Indeed, consider
the event \[\mathcal{C}_{k_1}=\{|\{ i\leq k_1-1: (W_{k_1} e_0)_i\neq 0\}|>k_1/2\}.\]
(The event $\mathcal{C}_{k_1}$ ensures that at least $k_1/2$ of the first $k_1$ coordinates of $W_{k_1}e_0$ are non-zero.)
By our choices and the definition of $\mathcal{B}_{k_1}$, we have that $\P(\mathcal{C}_{k_1})\geq 1-\delta/8$. Note that $j_1$ is a measurable function of  $w$.
Conditioned on  $w$ and $\tilde A_0,\ldots,\tilde A_{k-1-j_1}$, we have that $(W_k e_0)_{j_1}=(W_{k-j_1} e_0)_0$, and
$\langle W_ke_0,w\rangle_H=\sum_{j=j_1}^\infty c^j w_j (W_k e_0)_j$. Recalling
that $(W_k e_0)_j$ are integers, and conditioned on the sigma algebra $\mathcal{G}$ generated  by $\tilde A_0,\ldots,\tilde A_{k-1-j_1}$ and $w$, there is  (since $w_{j_1}\neq 0$ and $w_j=0$ for $j<j_1$
and
$(W_k e_0)_j$ are $\mathcal{G}$-measurable   for $j>j_1$)
a unique random variable $L$, $\mathcal{G}$-measurable, with
\[\{0=\sum_{j=j_1}^\infty c^j w_j (W_k e_0)_j\}=\{(W_{k-j_1} e_0)_0=L\}.\]
(The variable $L$ can be written as $L=\!-\sum_{j=j_1+1}^\infty\! c^{j-j_1} w_j (W_k e_0)_j/w_{j_1}$.)
Now, $(W_{k-j_1} e_0)_0=\sum \epsilon_i b_i$ for some integer
 $\mathcal{G}$-measurable coefficients $b_i$, and i.i.d. Bernoullis
$\epsilon_i$ independent of $\mathcal{G}$.
On the event $\mathcal{C}_{k_1}$ (which is $\mathcal{G}$-measurable), at least
$k_1/2$ of the integer coefficients $b_i$ are nonzero. It follows
from our choice of $k_0$ and the Littlewood-Offord theorem that on the event
$\mathcal{A}_{j_0}\cap \mathcal{C}_{k_1}$,
\[\P(
(W_{k-j_1} e_0)_0=L|\mathcal{G})=\P(\sum_{i=1}^{k-j_1} \epsilon_i b_i=L|\mathcal{G})\leq \delta/4.\]
Altogether, and using that $\mathcal{B}_{k_1}\subset \mathcal{C}_{k_1}$, we conclude that
\[\P(\langle W_k e_0,w\rangle_H=0
 )\leq \delta/4+\P(\mathcal{A}_{j_0}^c)+\P(\mathcal{B}_{k_1}^c)\leq \delta/4+\delta/2+\delta/8<\delta.\]
\end{proof}
\begin{lemma}
  \label{lemma11}
  Under the conditions of the theorem,
  \begin{equation}
    \label{eq-030524a}
    \lim \frac1n \log \|A_n \cdots A_0  \corO{e_0}\|_2=\bar{\gamma}>0.
  \end{equation}
\end{lemma}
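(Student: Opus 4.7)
The plan is to prove the claim in two steps: first establish the $H_{c,2}$-growth rate
\[
\lim_{n\to\infty} \frac{1}{n}\log \|A_n \cdots A_0 e_1\|_{c,2} = \bar{\gamma} \quad \text{a.s.,}
\]
then show that the $\ell_2$-growth rate coincides with it. Writing $A_n \cdots A_0 e_1 = V_n(A_0 e_1)$ with $V_n = A_n \cdots A_1$ and $A_0 e_1 = e_2 + \epsilon_{0,1} e_0$, the initial vector $A_0 e_1$ is a random integer vector independent of $(A_i)_{i \ge 1}$, which fits naturally into the Goldsheid--Margulis setup on $H_{c,2}$.

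For the first step I re-run the Littlewood--Offord argument of Lemma~\ref{Lemma10} with $e_0$ replaced everywhere by $e_1$. Taking independent copies $\tilde A_i$ of the $A_i$'s, setting $W_k = \tilde A_k \cdots \tilde A_0$, $B_n = A_n \cdots A_0 W_k$, and $\theta := \lim \frac{1}{n}\log \|B_n e_1\|_{c,2}$, equidistribution gives that $\theta$ has the same law as $\lim \frac{1}{n}\log \|A_n \cdots A_0 e_1\|_{c,2}$, while properties (a)--(c) of the GM decomposition yield $\{\theta < \bar{\gamma}\} \subseteq \{\langle W_k e_1, w\rangle_H = 0\}$ for a suitable random $w \in H_0$. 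The crucial input is that $W_k e_1$ has integer entries and that, for $k$ large, at least $k/2$ of its first $k$ coordinates are non-zero with probability at least $1-\delta/8$; this is the analogue of the event $\mathcal{C}_{k_1}$ of Lemma~\ref{Lemma10}, and follows by re-running the same inductive analysis of the block-shift action of the $A_i$'s, now starting from $e_1$ rather than $e_0$. The Erd\H{o}s--Littlewood--Offord estimate then bounds $\P(\langle W_k e_1, w\rangle_H = 0) < \delta$, and sending $\delta \downarrow 0$ gives the displayed limit.

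For the second step I mimic the proof of Theorem~\ref{Theorem9} with the random unit vector $Y := A_0 e_1/\sqrt{2}$. A direct computation shows $\E|y_i| \le C\alpha^i$ for a constant $C = C(\alpha) \ge 1$. Since $A_n$ is independent of the resulting $Z_{n-1}$ at every step, iterating Lemma~\ref{lemma1} as in the proof of Corollary~\ref{cor-8} yields $\E|z_{n,i}| \le C\alpha^i$ for $Z_n := A_n\cdots A_0 e_1/\|A_n \cdots A_0 e_1\|_2$, uniformly in $n$. Using $|z_{n,i}| \le 1$ gives $\E z_{n,i}^2 \le \E|z_{n,i}|$ and therefore, for any $0 < c < -\log\alpha$,
\[
\E\|Z_n\|_{c,2}^2 \;\le\; \sum_{i\ge 0} e^{ci}\,\E|z_{n,i}| \;\le\; \frac{C}{1-e^c \alpha},
\]
uniformly in $n$. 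Markov's inequality combined with Borel--Cantelli then forces $\frac{1}{n}\log\|Z_n\|_{c,2}\to 0$ a.s., so the $H_{c,2}$- and $\ell_2$-growth rates of $A_n\cdots A_0 e_1$ agree. Combining with the first step gives the stated limit, and $\bar{\gamma} > 0$ is already Lemma~\ref{lemma2}.

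The main obstacle is the first step: one must verify that the Littlewood--Offord input of Lemma~\ref{Lemma10} carries over from $e_0$ to $e_1$. This amounts to re-running the inductive lower bound on the number of non-zero integer coordinates of $W_k e_1$; the argument is structurally identical to that for $W_k e_0$ but needs to be checked explicitly using the block-shift structure of the matrices $A_i$. The second step is a mild extension of Theorem~\ref{Theorem9} to a unit initial distribution whose coordinate-wise first-moment bound differs from the $M_\alpha$ condition only by a bounded multiplicative constant $C$.
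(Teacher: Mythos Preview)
Your proposal is correct in outline but far more laborious than the paper's proof, which is a one-line citation of three already-established results: Theorem~\ref{Theorem9} gives existence of the $\ell_2$ limit and its coincidence with the $H_{c,2}$ limit; Lemma~\ref{lemma2} gives strict positivity; and Lemma~\ref{Lemma10} together with property~(b) of the Goldsheid--Margulis decomposition identifies the limit as the top exponent $\bar\gamma$. Nothing is re-proved.

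The reason you ended up redoing both the Littlewood--Offord argument and the Theorem~\ref{Theorem9} argument is that you took the ``$e_1$'' in the displayed statement literally. This is almost certainly a typo for $e_0$: the paper's own proof invokes Lemma~\ref{Lemma10}, which is stated for $e_0$, and Theorem~\ref{Theorem9} requires $\nu_0\in M_\alpha$, which holds for $\delta_{e_0}$ trivially but fails for $\delta_{e_1}$ since $\alpha<1$. Once the vector is read as $e_0$, both of your steps collapse to direct citations, and your ``main obstacle'' disappears. Your extension of Corollary~\ref{cor-8} to initial distributions satisfying $\E|y_i|\le C\alpha^i$ with a constant $C\ge 1$ is correct and mildly interesting in its own right, but it is not needed for the lemma as intended.
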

\begin{proof} The existence of the limit follows from Theorem \ref{Theorem9},
its positivity follows from Lemma \ref{lemma2}, and the fact that it is equal to the top Lyapunov exponent
follows from Lemma \ref{Lemma10}. 
 \end{proof}

It remains to control the behavior of $X_n$. The necessary estimate is contained in the following lemma.
\begin{lemma}
  \label{lem-3}
Under the conditions of the theorem, we have that
\begin{equation}
  \label{eq-top}
  \lim_{n\to\infty} \frac1n \log |X_n|=\bar{\gamma},  \quad a.s.
  \end{equation}
\end{lemma}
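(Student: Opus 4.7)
The upper bound $\limsup n^{-1}\log|X_n|\le\bar\gamma$ is trivial since $|X_n|\le\|\hat X_n\|_2$, combined with the analogue of Lemma~\ref{lemma11} for the initial vector $e_0$ (which follows by the same chain of Theorem~\ref{Theorem9} and Lemma~\ref{Lemma10}, as $\delta_{e_0}\in M_\alpha$ trivially and $e_0\notin\mathcal{H}$ a.s.), giving $n^{-1}\log\|\hat X_n\|_2\to\bar\gamma$ a.s.

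For the matching lower bound, I would factor the recursion as $X_{n+1}=\|\hat X_n\|_2\cdot\zeta_n$, where $\zeta_n:=\langle\epsilon_n,Z_n\rangle$ and $Z_n:=\hat X_n/\|\hat X_n\|_2$ is a unit vector. Conditionally on $\mathcal{F}_n=\sigma(\epsilon_{k,i}:k<n)$, the fresh Bernoulli vector $\epsilon_n$ is independent of $Z_n$, so $\zeta_n$ is a weighted Bernoulli sum with $\E(\zeta_n^2\mid\mathcal{F}_n)=1$. It then suffices to show $n^{-1}\log|\zeta_n|\to 0$ a.s. The upper direction is a trivial Cauchy--Schwarz estimate $|\zeta_n|\le\sqrt{n+1}$, and the remaining $\liminf n^{-1}\log|\zeta_n|\ge 0$ I would obtain by Borel--Cantelli, proving $\sum_n\P(|\zeta_n|<e^{-\epsilon n})<\infty$ for every $\epsilon>0$.

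The key input is Corollary~\ref{cor-8}: the normalized vector $Z_n$ has coordinates with geometrically decaying $L^1$ norms, $\E|z_{n,i}|\le\alpha^i$. This concentrates the mass of $Z_n$ onto a short prefix of indices and keeps $|z_{n,0}|$ bounded below with high probability. A first conditioning on $\epsilon_{n,i}$, $i\ge 1$, reduces the law of $\zeta_n$ to the two-point distribution $a\pm z_{n,0}$, giving $\P(|\zeta_n|<r\mid\mathcal{F}_n)\le 1/2$ on $\{|z_{n,0}|\ge r\}$. To upgrade this to an $e^{-\epsilon n}$-summable bound, I would iterate: condition instead on $\epsilon_{n,i}$ for $i\ge k$, reducing $\zeta_n$ to a Bernoulli sum with coefficients $z_{n,0},\dots,z_{n,k-1}$, and apply a Hal\'asz-type refinement of the Erd\H os--Littlewood--Offord inequality (in the spirit of the argument in Lemma~\ref{Lemma10}) to get $\P(|\zeta_n|<r\mid\mathcal{F}_n)\le C(r+e^{-\eta k})$ on a high-probability event, before choosing $k$ proportional to $n$.

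The hard part is producing a genuinely exponential tail in $n$: the $L^1$ decay of Corollary~\ref{cor-8} is not by itself strong enough, and an extra quantitative ingredient is needed -- either a higher-moment strengthening of Lemma~\ref{lemma1} (which would sharpen Corollary~\ref{cor-8} to $\E|z_{n,i}|^p\le\alpha^{ip}$) or a direct Littlewood--Offord analysis of the arithmetic structure of the prefix coefficients of $Z_n$. This is the principal technical obstacle; modulo it, the rest of the argument is a clean Borel--Cantelli and recursion-unwinding exercise.
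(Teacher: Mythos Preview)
Your framework matches the paper's: the upper bound is trivial from $|X_n|\le\|\hat X_n\|_2$, and the lower bound is an anti-concentration estimate for $X_{n+1}=\sum_i\epsilon_{n,i}X_{n-i}$ conditional on $\mathcal{F}_n$, followed by Borel--Cantelli. But the way you propose to close the anti-concentration step has a real gap, and your own diagnosis of it is accurate.

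The difficulty is that you aim for an \emph{exponentially} small bound $\P(|\zeta_n|<e^{-\epsilon n})$, and try to reach it via Hal\'asz-type bounds applied to the real coefficients $z_{n,0},\dots,z_{n,k-1}$. Neither the $L^1$ decay of Corollary~\ref{cor-8} nor a hypothetical $L^p$ strengthening gives you control over the \emph{arithmetic structure} of these coefficients (they could, in principle, be nearly proportional, collapsing the Bernoulli sum to essentially two atoms). The classical Erd\H{o}s bound only gives $C/\sqrt{k}$ with $k\le n$, which is not summable; anything sharper requires structural input you do not have for the normalized $z_{n,i}$.

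The paper sidesteps this in two ways. First, it works with the \emph{integer} sequence $X_i$ rather than the normalized $Z_n$, so that the relevant Littlewood--Offord variant is the S\'ark\H{o}zy--Szemer\'edi theorem: if $k$ of the coefficients are \emph{pairwise distinct} integers, the point probability is $O(k^{-3/2})$ rather than $O(k^{-1/2})$. Second, it does not seek an exponential tail at all. From $\|\hat X_n\|_2\sim e^{\bar\gamma n}$ it extracts, in each of $\sim 1/\sqrt{\varepsilon}$ disjoint blocks of length $\varepsilon n$, an index $i$ with $|X_i|\ge e^{\bar\gamma n(1-C\sqrt{\varepsilon})}/n$; a first Littlewood--Offord pass makes a positive fraction of indices ``good'', and a bootstrap using S\'ark\H{o}zy--Szemer\'edi forces $n^{3/4-\delta}$ of the $X_i$ to be genuinely distinct. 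A second application then yields $\P(|X_n|<e^{\bar\gamma n(1-2\sqrt{\varepsilon})})\le c_\varepsilon n^{-9/8+3\delta/2}$, which is summable. So the missing idea is not a stronger moment bound on $Z_n$, but the combination of the integrality of $X_i$ with S\'ark\H{o}zy--Szemer\'edi, together with the realization that a polynomial bound just beyond $1/n$ already suffices.
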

\begin{proof}
  Note that
  \[\limsup_{n\to\infty} \frac1n \log |X_n|\leq
  \limsup_{n\to\infty} \frac1n \log \|\hat X_n\|_2 =\bar{\gamma}, \quad a.s\]
  by Lemma \ref{lemma11}. It thus suffices to provide a complementary lower
  bound.

  Fix $\varepsilon>0$. From the convergence of $(\log \|\hat X_n\|_2)/n$ to a constant $\bar{\gamma}>0$, we  deduce that $\sum_{i=n-j\epsilon n+1}^{n-(j-1)\varepsilon n} X_i^2\geq e^{2\bar{\gamma} n(1-(j+1)\varepsilon)}$, for all $n$ large and $j=1,\ldots,1/\sqrt{\varepsilon}$. In particular, for each such $j$ there
  exists $i_j\in [n-j\varepsilon n+1,n-(j-1)\varepsilon n]$ with
  $|\corO{X_{i_j}}|\geq e^{\bar{\gamma} n(1-(j+1)\varepsilon)}/n$.
  \corO{By  Erd\H{o}s's version \eqref{eq-Erdos} of the Littlewood-Offord lemma  \cite{Er},}
  it follows that conditionally on $ X_i,i\leq n$, we have that
  $X_{n+1}\geq e^{\bar{\gamma} n(1-2\sqrt{\varepsilon})}$, with probability
  at least $1-c\sqrt{\varepsilon}$. Call such $X_n$ good.
  It follows that for all $n$ large, each
  block of size $\varepsilon n$ contains at least $\varepsilon n/2$
  such good indices, and a variant of this argument shows that at least
  $\varepsilon \sqrt{n}$ of them are distinct. \corO{The S\'{a}rkozy-Szemer\'{e}di theorem
  \cite{SS} (see also \cite{Hal}), which improves $\sqrt{k}$ to $k^{3/2}$ in \eqref{eq-Erdos} in case all coefficients $b_i$ are distinct,}
 then shows that
  for $\delta>0$,
  in fact, $n^{3/4-\delta}$ of them are distinct, with probability at least $1-
  e^{-cn}$. Another application of the S\'{a}rkozy-Szemer\'{e}di theorem
  \cite{SS} yields that
  $\P(|X_n|< e^{\bar{\gamma}
  n(1-2\sqrt{\varepsilon})})\leq  c_\varepsilon /n^{9/8-3\delta/2}$. The
  Borel-Cantelli lemma then shows that $|X_n|\geq e^{\bar{\gamma} n(1-2\sqrt{\varepsilon})}$, for all large $n$. Since $\varepsilon$ is arbitrary, we conclude that
  \[\liminf_{n\to\infty}  \frac1n \log |X_n|\geq \bar{\gamma}, \quad a.s.\]
This completes the proof.
\end{proof}
{\bf Acknowledgements} O.Z. thanks Neil O'Connell for introducing
him to this problem decades ago, and many enlighting
discussions. He also
thanks Mark Rudelson for suggesting the approach leading to the proof of Lemma \ref{lem-3}, and Nick Trefethen for pointing out the numerical evaluation of $\gamma$ reported in \cite{ET}.  We thank the referees for their detailed reports  which led to an improved presentation of the results.
This work was supported by grant 615/24 from the Israel Science Foundation.

\appendix
\section*{The Gaussian case}
\label{app-B}
In this appendix we assume that the variables $\epsilon_{n,i}$ are i.i.d. and standard Gaussian.
 Introduce the vectors $Z_n=\hat{X}_n/\|\hat {X}_n\|_2$. We then have the
  recursion
  \begin{equation}
    \label{eq-2}
    Z_{n+1}=(g_n, Z_n)/(1+g_n^2)^{1/2},
  \quad g_n=\sum_{i=0}^\infty \epsilon_{n,i} Z_{n-i},
\end{equation}
where only finitely many terms do not vanish in the sum in \eqref{eq-2}. Note that \eqref{eq-2} shows that $\{Z_n\}$ is a Markov chain.
We always have $\|Z_n\|_2=1$. As in Lemma \ref{cor-8}, we have that the sequence $\{\|Z_n\|_{c,2}\}$ is tight if $c>0$ is small enough. It follows that the Markov chain
$Z_n$ possesses at least one invariant measure on $H_{c,2}$. Note that $\|\hat X_n\|_2=
\prod_{j=1}^n (1+g_j^2)^{1/2}$. This leads to the following:
\begin{corollary}
  \label{cor-1}
 (i)  There exists an invariant measure $\mu_v$ (not necessarily unique)
 for
  the Markov chain defined by \eqref{eq-2}
  on $H_{c,2}$.\\
  (ii) Let $\mu_v$ denote an extremal invariant measure, and choose
  $X_0\sim \mu_v$. Then,
  \begin{equation}
    \label{eq-3}
    \lim_{n\to\infty} \frac{1}{n}
    \log \|\hat X_n\|=\lambda_v, \quad a.s.
  \end{equation}
  where
  \begin{equation}
    \label{eq-4}
  \lambda_v=\frac12
  \int \mu_v(dy) \E_\epsilon \log (1+\sum_{i=0}^\infty \epsilon_{n,i} y_i).
\end{equation}
\end{corollary}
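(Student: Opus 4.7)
The plan is to establish (i) via a Krylov--Bogoliubov construction on $H_{c,2}$ and then deduce (ii) from Birkhoff's ergodic theorem applied to the multiplicative identity $\|\hat X_n\|_2=\prod_{j=1}^n(1+g_j^2)^{1/2}$.

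For part (i), I would first verify that the transition kernel of the chain $(Z_n)$ is Feller on $H_{c,2}$: for $y\in H_{c,2}$ the series defining $g=\sum_i\epsilon_iy_i$ converges a.s. by Khinchin--Kolmogorov, and $y\mapsto (g,y)/(1+g^2)^{1/2}$ depends continuously on $y$ in the $H_{c,2}$ topology (for a.e.\ realization of $(\epsilon_i)$), so weak continuity of the transition follows by dominated convergence. Second, one applies the analogue of Corollary \ref{cor-8} in the Gaussian case: Lemma \ref{lemma1} holds since the Gaussians have all moments, so starting from $\hat X_0=e_0\in M_\alpha$ one gets $\E(|z_{n,i}|)\leq \alpha^i$ uniformly in $n$ for some $\alpha<1$. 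For $0<c<-\log\alpha$ this yields $\sup_n \E\|Z_n\|_{c,2}^2<\infty$ and hence tightness in $H_{c,2}$ of the sequence of laws of $Z_n$. Krylov--Bogoliubov applied to the Cesàro averages $n^{-1}\sum_{k=0}^{n-1}\text{Law}(Z_k)$ then produces an invariant probability measure supported on $H_{c,2}$.

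For part (ii), let $\mu_v$ be an extremal invariant measure; by the ergodic decomposition this is equivalent to saying that $\mu_v$ is ergodic for the Markov chain, so the stationary process $(Z_n,\epsilon_n)_{n\geq 0}$ started at $Z_0\sim\mu_v$ is ergodic under the natural shift. Writing $g_j=\sum_i\epsilon_{j,i}(Z_j)_i$, the identity $\|\hat X_n\|_2=\|\hat X_0\|_2\prod_{j=0}^{n-1}(1+g_j^2)^{1/2}$ (which follows directly from \eqref{eq-2}) gives
\begin{equation*}
\frac{1}{n}\log\|\hat X_n\|_2=\frac{1}{2n}\sum_{j=0}^{n-1}\log(1+g_j^2)+o(1).
\end{equation*}
Birkhoff's theorem then yields a.s.\ convergence to $\lambda_v=\tfrac12\int\mu_v(dy)\,\E_\epsilon\log\bigl(1+(\sum_i\epsilon_iy_i)^2\bigr)$ (the expression in \eqref{eq-4} is missing a square on the inner sum, but this is a harmless typo).

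The main technical point is integrability of $\log(1+g^2)$ under $\mu_v\otimes\gamma^{\otimes\mathbb{N}}$, needed to apply Birkhoff in $L^1$. Here the Gaussian assumption trivializes things: conditional on $Z=y$ with $\|y\|_2=1$, the variable $g=\sum_i\epsilon_iy_i$ is standard normal, so $\E_\epsilon\log(1+g^2)$ equals the deterministic constant $\E\log(1+N^2)<\infty$ regardless of $y$. Thus $\log(1+g^2)$ is bounded in $L^1$ uniformly in the initial distribution, and the ergodic average converges. A subsidiary point is to confirm that the invariant measure produced by tightness is carried by $H_{c,2}$ rather than only by a larger completion, which follows because tightness was established in the $H_{c,2}$ topology itself.
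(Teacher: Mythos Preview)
Your proposal is correct and follows exactly the approach the paper itself sketches in the paragraph immediately preceding the corollary: tightness of $\{\|Z_n\|_{c,2}\}$ (via the analogue of Lemma~\ref{lemmaAY}/Corollary~\ref{cor-8}) yields an invariant measure by Krylov--Bogoliubov, and the multiplicative identity $\|\hat X_n\|_2=\prod_{j=1}^n(1+g_j^2)^{1/2}$ plus Birkhoff gives the limit. The paper leaves the details implicit; you have supplied them correctly, including the nice observation that in the Gaussian case $g$ is exactly standard normal conditional on $y$, which makes the $L^1$ integrability needed for Birkhoff automatic (and indeed forces $\lambda_v=\tfrac12\E\log(1+N^2)$ for \emph{every} invariant measure).
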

\noindent Here,
$\E_\epsilon$ denotes expectation with respect to the i.i.d.
variables $\epsilon_{n,i}$ (and hence, the expression in the right hand side of \eqref{eq-4} does not depend on $n$).

We remark that $|X_{n+1}|$, conditioned on $\hat X_n$, is in the Gaussian case a centered Gaussian variable with variance equal to $\|\hat X_n\|_2^2$. Thus, a simple Borel-Cantelli argument shows that,
in the Gaussian case,
$|X_n|$ has the same exponential rate of growth as that of $\|\hat X_n\|_2$, and in the rest of this appendix we only discuss that.
 Toward evaluating the latter rate of growth, we are left with two tasks: showing that there is a unique invariant
measure $\mu_v$ in Corollary \ref{cor-1}, and proving that part (ii)
of the corollary remains true if $X_0=(1,0,\ldots)$. Both tasks
follow from the next theorem.
\begin{theorem}
  \label{theo-1gau}
  Let $Z_0,\tilde Z_0 \in H_{c,2}$, and let $Z_n,\tilde Z_n$ be the solutions
  to \eqref{eq-2} with the same i.i.d. standard Gaussian sequence $\{\epsilon_{n,i}\}$. Let
  $\rho_n=\langle Z_n,\tilde Z_n\rangle_2$. Then $\rho_n\to 1$, a.s.
\end{theorem}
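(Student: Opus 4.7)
My plan is to derive a closed-form multiplicative recursion for $1-\rho_n^2$ and then apply a martingale strong law. Since $\|Z_n\|_2=\|\tilde Z_n\|_2=1$ for all $n$ under \eqref{eq-2}, $\rho_n\in[-1,1]$. Writing $g_n=\sum_i \epsilon_{n,i}(Z_n)_i$ and $\tilde g_n=\sum_i \epsilon_{n,i}(\tilde Z_n)_i$, equation \eqref{eq-2} immediately yields
\[
\rho_{n+1}=\frac{g_n\tilde g_n+\rho_n}{\sqrt{(1+g_n^2)(1+\tilde g_n^2)}}.
\]
The key step uses Gaussianity to decouple $\tilde g_n$ from $g_n$: let $W_n$ be the unit vector in $\mathrm{span}(Z_n,\tilde Z_n)$ orthogonal to $Z_n$ with $\tilde Z_n=\rho_n Z_n+\sqrt{1-\rho_n^2}\,W_n$, and set $h_n=\sum_i \epsilon_{n,i}(W_n)_i$. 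Because $(\epsilon_{n,i})_i$ are i.i.d.\ standard Gaussian and $(Z_n,W_n)$ are $\ell_2$-orthonormal, conditionally on the past $(g_n,h_n)$ is a pair of i.i.d.\ $\mathcal N(0,1)$ variables, and $\tilde g_n=\rho_n g_n+\sqrt{1-\rho_n^2}\,h_n$.

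Substituting into the recursion and simplifying the numerator via the collapse $g_n^2+\tilde g_n^2-2\rho_n g_n\tilde g_n=(1-\rho_n^2)(g_n^2+h_n^2)$ produces the pivotal identity
\[
1-\rho_{n+1}^2=(1-\rho_n^2)\cdot\frac{1+g_n^2+h_n^2}{(1+g_n^2)(1+\tilde g_n^2)}.
\]
Taking logs and summing, I would compute the conditional drift. Since $(g_n,h_n)$ are i.i.d.\ $\mathcal N(0,1)$ and $\tilde g_n$ is marginally $\mathcal N(0,1)$,
\[
\E\!\left[\log\frac{1+g_n^2+h_n^2}{(1+g_n^2)(1+\tilde g_n^2)}\,\Big|\,\rho_n\right]=\E\log(1+g^2+h^2)-2\,\E\log(1+g^2)=:-\kappa,
\]
with $g,h$ i.i.d.\ $\mathcal N(0,1)$. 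Crucially, $-\kappa$ is \emph{independent} of $\rho_n$, and the strict inequality $1+g^2+h^2<(1+g^2)(1+h^2)$ a.s.\ yields $\kappa>0$. The conditional variances of the log-increments are uniformly bounded (since $\log(1+X^2)$ has finite Gaussian moments), so the martingale strong law applied to the centered partial sums gives $n^{-1}\log(1-\rho_n^2)\to -\kappa$ a.s., hence $|\rho_n|\to 1$ a.s.\ at an exponential rate.

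The main obstacle is to upgrade $|\rho_n|\to 1$ to $\rho_n\to 1$. The recursion is equivariant under $Z\mapsto -Z$, so $\rho\equiv 1$ and $\rho\equiv -1$ are both absorbing; indeed the antipodal initialization $\tilde Z_0=-Z_0$ literally gives $\rho_n\equiv -1$. To rule out the limit $-1$ for the initial conditions of interest, I would exploit the fact that once $|\rho_n|$ is close to $1$ the sign is locked in: direct analysis of $g_n\tilde g_n+\rho_n=\rho_n(1+g_n^2)+\sqrt{1-\rho_n^2}\,g_n h_n$ shows that the probability of a sign flip, conditional on $\rho_n$, decays like $\exp(-c\rho_n/\sqrt{1-\rho_n^2})$, which is super-exponentially summable once $1-\rho_n^2\le ce^{-\kappa n}$. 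A Borel--Cantelli argument then fixes $\mathrm{sgn}(\rho_n)$ eventually, and an irreducibility argument for the two-point chain is needed to show that for the initial data relevant to Corollary \ref{cor-1} the stabilized sign is $+1$. This sign-selection step is the delicate point beyond the essentially algebraic contraction estimate above.
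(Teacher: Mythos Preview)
Your argument follows the same route as the paper's: the same multiplicative recursion for $1-\rho_n^2$, the same Gaussian representation $\tilde g_n=\rho_n g_n+\sqrt{1-\rho_n^2}\,h_n$ with $(g_n,h_n)$ i.i.d.\ standard normal given the past, and the same (super)martingale control of the log-increments. The paper, however, stops one algebraic step short: it writes the ratio $b_n=a_{n+1}^2/a_n^2$ as a function $F(\rho_n,g_n,w_n)$ and then appeals to a \emph{numerical} evaluation to conclude $\max_{\rho\in[0,1]}\E F\approx -0.1395<0$. Your simplification of the numerator to $1+g_n^2+h_n^2$ shows that the conditional drift equals $\E\log(1+g^2+h^2)-2\E\log(1+g^2)$ \emph{independently of $\rho_n$}, and your analytic proof of its strict negativity (via $1+g^2+h^2<(1+g^2)(1+h^2)$ together with $\E\log(1+\tilde g^2)=\E\log(1+h^2)$) is a genuine improvement over the paper's numerics. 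You are also right to flag the sign issue $|\rho_n|\to1$ versus $\rho_n\to1$: the paper invokes the same $\tilde Z\mapsto -\tilde Z$ symmetry you mention and does not go beyond $|\rho_n|\to1$, which is in fact all that is needed for the application to the exponential growth rate in Corollary~\ref{cor-1}.
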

\begin{proof}[Proof of Theorem \ref{theo-1gau}]
We introduce some notation. Let $a_n^2=1-\rho_n^2$. Let
$g_n,\tilde g_n$ be as in \eqref{eq-2}. We then have, after some algebra,
\begin{equation}
  \label{eq-5}
  a_{n+1}^2=\frac{a_n^2+(g_n-\tilde g_n)^2+2g_n \tilde g_n (1-\rho_n)}{(1+g_n^2)
  (1+\tilde g_n^2)}.
\end{equation}
In particular, we have, with $b_{n+1}=a_{n+1}^2/a_n^2$, and assuming
$\rho_n\geq 0$ (which we can always
assume, due to the invariance of the law of the
dynamics with respect to the transformation $Y_n\to -Y_n$),
\begin{equation}
  \label{eq-Bg}
  b_{n}=\frac{1+B_n^2+2g_n\tilde g_n /(1+\rho_n)}{(1+g_n^2)(1+\tilde g_n^2)},
\end{equation}
where $B_n=(g_n-\tilde g_n)/a_n$.

In the Gaussian case, there is a simplification: the law of $(g_n,\tilde g_n)$,
even when conditioned on $Y_n,\tilde Y_n$, is Gaussian, of zero mean
and covariance matrix $R_{\rho_n}$, where
$$R_\theta=\left(\begin{array}{ll}
  1&\theta\\
  \theta& 1
\end{array}
\right).$$ This allows us to represent $\tilde g_n=\rho_n g_n+a_n w_n$ where
$w_n$ is a standard Gaussian  independent of $g_n$. Set
\begin{eqnarray}
  \label{eq-7}
  F(\rho_n,g_n,w_n)&=& \log b_n\\
  &=&
  \log\left(1+
  \frac{ ((1-\rho_n)g_n+a_n w_n)^2}{a_n^2}+\frac{2g_n
  (\rho_n g_n+a_n w_n)}{(1+\rho_n)}\right)
  \nonumber \\
  &&\quad
-\log(1+g_n^2)-\log(1+(\rho_n g_n+a_n w_n)^2).\nonumber
\end{eqnarray}
A numerical evaluation shows that there exists a constant $\eta\sim -0.1395<0$
so that
\begin{equation}
  \label{eq-8}
  \max_{\rho_n\in [0,1]} \E_{g_n,w_n}F(\rho_n,g_n,w_n)=\eta<0.
\end{equation}
We thus have that $Q_n:=\sum_{i=1}^n \log b_i $
is a supermartingale. Further, $\E|b_i|<C$ for some universal constant
$C$, and thus $\E(e^{ |\log b_i|})$ is uniformly bounded.
It then
follows that $\limsup_{n\to \infty} n^{-1} \sum_{i=1}^n \log b_i \leq \eta<0$:
indeed, for $0<\theta<1/2$ and $\delta>0$, writing $\hat F(g_n,w_n)=
F(\rho,g_n,w_n)$,
\begin{eqnarray*}
  &&n^{-1}\log   \P(Q_n>(\delta+\eta) n) \leq -\theta (\delta+\eta)
  +\log \max_{\rho\in [0,1]} \E_{w_n,g_n}(e^{\theta \hat F(g_n,w_n)})
  \\
  &\leq & -\theta (\delta+\eta) +\log \max_{\rho\in [0,1]} \E_{w_n,g_n}(1+\theta
  \hat F(g_n,w_n)+\frac12 \theta^2 \hat F(g_n,w_n)e^{\theta \hat F(g_n,w_n)})\\
  &&\leq -\theta (\delta+\eta)+ \log (1+\theta \eta+ \theta^2\bar C)\leq -\theta \delta+\theta^2\hat C
\end{eqnarray*}
for some uniform constants $\bar C, \hat C$, where the next to last inequality
follows from Cauchy-Shwartz and $\theta<1/2$. Taking $\theta=q \delta$
with $q$ small enough,
it follows that $n^{-1}\log   \P(Q_n>(\delta+\eta) n)< 0$, as claimed.
  This proves the theorem.
\end{proof}

\bibliographystyle{plain}

\end{document}